\documentclass[12pt,reqno]{article}

\usepackage[usenames]{color}
\usepackage{amssymb}
\usepackage{amsmath}
\usepackage{amsthm}
\usepackage{amsfonts}
\usepackage{amscd}
\usepackage{graphicx}
\usepackage{mathtools}
\usepackage{dirtytalk}
\usepackage{enumerate} 
\usepackage{tikz}

\usepackage[colorlinks=true,
linkcolor=webgreen,
filecolor=webbrown,
citecolor=webgreen]{hyperref}

\definecolor{webgreen}{rgb}{0,.5,0}
\definecolor{webbrown}{rgb}{.6,0,0}

\usepackage{color}
\usepackage{fullpage}
\usepackage{float}

\usepackage{graphics}
\usepackage{latexsym}
\usepackage{epsf}
\usepackage{breakurl}

\setlength{\textwidth}{6.5in}
\setlength{\oddsidemargin}{.1in}
\setlength{\evensidemargin}{.1in}
\setlength{\topmargin}{-.1in}
\setlength{\textheight}{8.4in}

\newcommand{\seqnum}[1]{\href{https://oeis.org/#1}{\rm \underline{#1}}}


\DeclarePairedDelimiter\floor{\lfloor}{\rfloor}
\DeclarePairedDelimiter\abs{\lvert}{\rvert}

\newcommand{\Z}{\mathbb{Z}}
\newcommand{\N}{\mathbb{N}}

\newcommand{\C}{\mathbb{C}}

\renewcommand{\phi}{\varphi}

\newcommand{\suchthat}{\colon}
\newcommand{\divides}{\mid}
\newcommand{\modd}[2]{#1\ \mbox{\rm (mod}\ #2\mbox{\rm )}}

\theoremstyle{plain}
    \newtheorem{theorem}{Theorem}
    \newtheorem{corollary}[theorem]{Corollary}
    \newtheorem{lemma}[theorem]{Lemma}

\theoremstyle{definition}

    \newtheorem{conjecture}[theorem]{Conjecture}

\theoremstyle{remark}


\newcommand\blfootnote[1]{%
  \begingroup
  \renewcommand\thefootnote{}\footnote{#1}%
  \addtocounter{footnote}{-1}%
  \endgroup
}

\author{Max A. Alekseyev \\
Department of Mathematics\\
The George Washington University\\
800 22nd Street NW \#7685\\
Washington, DC 20052 \\
USA\\
\href{mailto:maxal@gwu.edu}{\tt maxal@gwu.edu} \\
\and 
Tewodros Amdeberhan \\
Department of Mathematics \\
Tulane University \\
6823 St. Charles Avenue\\
New Orleans, LA 70118-5698\\
USA \\
\href{mailto:tamdeber@tulane.edu}{\tt tamdeber@tulane.edu} \\
\and 
Jeffrey Shallit$^*$
and Ingrid Vukusic$^*$\\
School of Computer Science\\
University of Waterloo\\
Waterloo, ON  N2L 3G1 \\
Canada\\
\href{mailto:shallit@uwaterloo.ca}{\tt shallit@uwaterloo.ca}\\
\href{mailto:ingrid.vukusic@uwaterloo.ca}{\tt ingrid.vukusic@uwaterloo.ca}\\
\blfootnote{Research of the last two authors supported by NSERC grant 2024-03725.}
}

\title{On the $p$-adic valuations of values of Legendre polynomials}

\begin{document}

\maketitle

\begin{abstract}
We prove an explicit formula for the $p$-adic valuation of the Legendre polynomials $P_n(x)$ evaluated at a prime $p$, and generalize an old conjecture of the third author. We also solve a problem proposed by Cigler in 2017.
\end{abstract}

\section{Introduction}

For a positive integer $n$ and a prime number $p$, let $\nu_p (n) := \max \{i \suchthat p^i \divides n\}$ denote the \emph{$p$-adic valuation} of $n$.  More generally, for a positive rational number $m/n$, we have $\nu_p(m/n) := \nu_p(m) - \nu_p(n)$, which further extends to all rational numbers $r$ via $\nu_p(r) := \nu_p(|r|)$ and $\nu_p(0):=+\infty$.

Determining the $p$-adic valuation of the elements of
various combinatorial sequences is an old, interesting, and often challenging problem.  
For example, in 1830 Legendre~\cite[p.~10]{Legendre:1830}
gave a celebrated formula for $\nu_p(n!)$:
\begin{equation}
\nu_p(n!) = \sum_{i \geq 1} \left\lfloor {\frac{n}{p^i}} \right\rfloor .
\label{leg1}
\end{equation}
Although this sum is formally over infinitely many values of $i$, its terms are $0$ for all sufficiently large $i$.
An alternative formulation is
\begin{equation}
\nu_p(n!) = \frac{n-s_p(n)}{p-1},
\label{leg2}
\end{equation}
where $s_p(n)$ denotes the sum of the base-$p$ digits of $n$.

Later, Kummer~\cite{Kummer:1852}
gave a formula for $\nu_p({n \choose  k })$;
namely, he expressed it as the number of carries
in the base-$p$ addition of $k$ and $n-k$.

Since then, the $p$-adic valuations of many other sequences have been studied. Among them are the Fibonacci and tribonacci numbers studied by Lengyel~\cite{Lengyel:1995} and by Marques and Lengyel~\cite{Marques&Lengyel:2014}, respectively.

In some instances, the $p$-adic valuation of a sequence $(c(n))_{n \geq 0}$ is \emph{$p$-regular}, that is, the $p$-kernel of the sequence $(\nu_p(c(n)))_{n \geq 0}$ produces a finitely generated
module~\cite{Allouche&Shallit:1992,Allouche&Shallit:2003a}. In other words, the set of subsequences
$$\{ (\nu_p ( c(p^e n + i)))_{n \geq 0} \suchthat e\geq 0, 0\leq i < p^e \}$$ is of finite rank over the rationals. 
For example, the sequence $c(n) = {{2n}\choose n}$ is $p$-regular, since from Eq.~\eqref{leg2} it follows that
the $p$-kernel
of $(\nu_p(c(n)))_{n \geq 0}$ is spanned by the three sequences $(\nu_p(c(n)))_{n \geq 0}$,
$(\nu_p (c(pn+p-1)))_{n \geq 0}$, and the constant sequence $1$.

Boros, Moll, and the third author studied the $2$-regularity of the $2$-adic valuation of certain polynomials associated with definite integrals~\cite{Boros&Moll&Shallit:2000}. 
Bell~\cite{Bell:2007} and Medina, Moll, and Rowland~\cite{Medina&Moll&Rowland:2017} studied the case of polynomial $c(n)$ more generally. To name a few other papers,
Shu and Yao \cite{Shu&Yao:2011} characterized analytic functions  $f \colon \Z_p \to \C_p$
without roots in $\N$ such that $(\nu_p (f(n)))_{n \geq 0}$ is $p$-regular. 
Medina and Rowland~\cite{Medina&Rowland:2015} further studied the $p$-regularity of the Fibonacci numbers, and Murru and Sanna~\cite{Murru&Sanna:2018} analyzed that of the more general Lucas sequences. 

In other cases, the $p$-adic valuations exhibit various kinds of regularities without actually being $p$-regular. For example, see~\cite{Aidagulov&Alekseyev:2018,Almodovar:2019,Givens&Moll:2020}.

In this paper, we study the $p$-adic valuations of the Legendre polynomials $P_n(x)$ evaluated at a prime number $p$, and show that they are $p$-regular.
In the next section, we provide some motivation for studying this question.


\section{Motivation}

Back in 1988, when the third author (JS) was an assistant editor of the problems section of
the {\it American Mathematical Monthly}, he received a submission from Nicholas Strauss and Derek Hacon with a proof of an inequality about the $3$-adic valuation of the sequence
$d(n) := \sum_{0 \leq i < n} {{2i} \choose i}$. The sequence $d(n)$ is present in the On-Line Encyclopedia of Integer Sequences (OEIS)~\cite{oeis} as sequence \seqnum{A006134}.

JS guessed that $A(n) := (\nu_3 (d(n)))$ might be a $3$-regular
sequence and used a computer program to discover the following heuristic relations:
\begin{align*}
A(3n+2) &= A(n)+2\\
A(9n) &= A(3n) \\
A(9n+1) &= A(3n) + 1 \\
A(9n+3) &= A(3n) \\
A(9n+4) &= A(3n+1) + 1 \\
A(9n+6) &= A(3n+1) \\
A(9n+7)  &= A(3n+1) + 1.
\end{align*}
They led JS to conjecture that
$$ \nu_3 (d(n)) = \nu_3 ({{2n} \choose n}) + 2 \nu_3 (n),$$
which he was later able to prove with the helpful advice of Jean-Paul Allouche.
The original problem proposal, modified to give the exact formula, eventually appeared as Problem 6625 in the {\it American Mathematical Monthly} in 1990~\cite{Strauss&Shallit:1990}, and a completely different solution by Don Zagier, based on $3$-adic analysis, was published two years later~\cite{Zagier:1992}.  The sequence $\nu_3(d(n))$ is the sequence \seqnum{A082490} in the OEIS.

After this, JS was inspired to use the same computer program to explore
whether the $p$-adic valuations of other combinatorial sums might (conjecturally) have similar identities. This effort was largely unsuccessful, with the exception of the following sequence:
Define
\begin{equation}
a(n) := \sum_{0 \leq i \leq n} {n\choose i}{{n+i} \choose i},
\label{leg}
\end{equation}
known as {\it central Delannoy numbers} and listed as sequence \seqnum{A001850} in OEIS. Define $b(n) := \nu_3(a(n))$; this is sequence
\seqnum{A358360} in the OEIS. Then the numerical evidence supported the following conjecture.
\begin{conjecture}
The sequence $(b(i))_{i\geq 0}$ satisfies the following identities:
$$
b(i) = \begin{cases} b(\lfloor i/3 \rfloor) + (\lfloor i/3 \rfloor \bmod 2), & \text{if 
$i \equiv \modd{0,2} {3}$;} \\
b(\lfloor i/9 \rfloor) + 1, &\text{if 
$i \equiv \modd{1} {3}$.}
\end{cases}
$$
\label{jeffcon}
\end{conjecture}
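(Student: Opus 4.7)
The starting point is the classical identity
$$P_n(x) = \sum_{k=0}^{n}\binom{n}{k}\binom{n+k}{k}\left(\frac{x-1}{2}\right)^{k},$$
which at $x=3$ collapses to $P_n(3) = \sum_{k=0}^{n}\binom{n}{k}\binom{n+k}{k} = a(n)$. Hence $b(n) = \nu_3(P_n(3))$, and Conjecture~\ref{jeffcon} is simply an assertion about the $3$-adic valuation of $P_n(3)$, exactly the object controlled by the explicit formula for $\nu_p(P_n(p))$ promised in the abstract. The plan is to apply that formula at $p=3$ to obtain a closed-form expression for $b(n)$ that depends only on the base-$3$ representation of $n$, and then verify each case of the conjecture by direct comparison.

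More concretely, write $n$ in base $3$ as $n = (d_s d_{s-1} \cdots d_1 d_0)_3$. Stripping off the lowest digit corresponds to passing from $n$ to $\lfloor n/3 \rfloor$, so it suffices to compute how $b(n)$ changes when one appends a digit $d_0 \in \{0,1,2\}$ to $\lfloor n/3 \rfloor$. The goal is to show that this increment depends on $d_0$ and on the parity of the higher digits in precisely the way dictated by the two cases of the conjecture.

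For $d_0 \in \{0,2\}$ the predicted increment is $\lfloor n/3 \rfloor \bmod 2 = \bigl(\sum_{j \ge 1} d_j\bigr) \bmod 2$, which should emerge from a parity-type sum internal to the main formula; this bookkeeping step ought to be routine, amounting to showing that appending a $0$ or a $2$ at the bottom contributes exactly the parity of the sum of the remaining digits.

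The hard case will be $d_0 = 1$. There the recursion drops all the way to $b(\lfloor n/9 \rfloor)$, not to $b(\lfloor n/3 \rfloor)$, so a trailing base-$3$ digit equal to $1$ must be processed jointly with the next-higher digit $d_1$. My plan is to split further according to $d_1 \in \{0,1,2\}$ (equivalently, $n \equiv 1,4,7 \pmod 9$) and prove in each subcase the identity $b(n) = b(\lfloor n/9 \rfloor) + 1$. The main obstacle will be checking that the parity term present in the other two cases disappears here and is uniformly replaced by the constant $+1$ across all three subcases. This delicate cancellation will likely follow from a combinatorial identity about the base-$3$ digits of $n$, obtained by a careful manipulation of the main formula's closed form and the fact that appending two digits at once (a $1$ followed by any $d_1$) always shifts a ``parity flip'' introduced by the $1$ onto the digit above, trading two parity terms for the single additive $+1$.
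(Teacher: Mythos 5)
Your plan is essentially the paper's own derivation: identify $a(n)=P_n(3)$ via Eq.~\eqref{useful} at $x=3$, invoke the explicit formula for $\nu_3(P_n(3))$ (Theorems~\ref{thm:vp-Legendre_explicit_p3} and~\ref{thm:main}), read off the cases $i\equiv 0,2\pmod 3$ directly, and handle $i\equiv 1\pmod 3$ by descending two base-$3$ digits and checking that the parity terms cancel against the constant $+1$. The steps you defer as ``routine'' and ``likely'' are exactly the short verifications the paper carries out (the key point being that $(m\bmod 2)+((3m+1)\bmod 2)=1$ while appending an even digit preserves parity), so your outline is correct and takes the same route.
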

This conjecture appeared in \cite[p.~453]{Allouche&Shallit:2003b}, and JS frequently mentioned it in his talks (e.g.,~\cite{Shallit:2000}).
This conjecture was proved only in 2023 by Shen~\cite{Shen:2023}. 

In 2017, JS posted the conjecture and its generalization to arbitrary prime $p$ as a query on the MathOverflow website~\cite{Shallit:2017}, which ultimately inspired the authors to form a team and settle the generalized conjecture (Theorem~\ref{thm:main} below) with a joint effort. Namely, we prove explicit and recurrence formulae for $\nu_p(P_n(p))$ for every prime $p\geq 3$, from which Conjecture~\ref{jeffcon} follows as a partial case with $p=3$ and $a(n)=P_n(3)$.

As we will see in Theorem~\ref{cigler}, our results also imply the identity $\nu_p(M_n(p)) = \nu_p(P_n(p))$ conjectured by Cigler~\cite{Cigler:2017} for the polynomials
\begin{equation}\label{eq:Cigler_def}
    M_n(x) := \sum_{k=0}^n {n \choose k}^2 (x-1)^k.
\end{equation}

Along the lines of these results, we also pose the following open question.

\begin{conjecture} For every integer $n\geq0$, we have
\begin{align*}
\nu_3(\sum_{k=0}^n\binom{n}k^3 2^k) & = \begin{cases}
s_3(\frac{n-1}2)+1, & \text{if $n\equiv-1\pmod 6$}; 
\\ s_3(\lfloor\frac{n+1}2\rfloor), & \text{otherwise}. 
\end{cases} 
\end{align*}
\end{conjecture}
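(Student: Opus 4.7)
The plan is to adapt the strategy used for Theorem~\ref{thm:main}. Set $T(n) := \sum_{k=0}^n \binom{n}{k}^3 2^k$. Because $2^k$ is a $3$-adic unit, Kummer's theorem gives
$\nu_3\bigl(\binom{n}{k}^3 2^k\bigr) = 3\, c_3(k, n-k)$, where $c_3(k,n-k)$ denotes the number of carries when $k$ and $n-k$ are added in base $3$. In particular, the summands of minimal $3$-adic valuation are exactly those indexed by $k$ whose base-$3$ digits are componentwise at most those of $n$, by Lucas's theorem.

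First, I would apply Zeilberger's algorithm to $T(n)$ to obtain an explicit P-recurrence $\sum_{j} p_j(n)\, T(n+j) = 0$ with polynomial coefficients. Unlike the situation for $P_n(p)$, no obvious hypergeometric simplification of $T(n) = {}_3F_2(-n,-n,-n;1,1;-2)$ is available, so the recurrence---rather than a closed form---must carry the argument. Second, I would analyze this recurrence $3$-adically, aiming to establish, by simultaneous induction over the six residue classes of $n$ modulo $6$, a system of relations in which $\nu_3(T(n))$ is expressed in terms of $\nu_3(T(m))$ for some smaller $m$ depending on $n \bmod 6$ (and possibly $n \bmod 18$), in the spirit of the auxiliary recurrences that prove Conjecture~\ref{jeffcon}. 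Once such a system is in hand, the closed form would follow by induction on the base-$3$ length of $\lfloor (n+1)/2 \rfloor$.

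The main obstacle is the cancellation among the many summands sharing the minimal $3$-adic valuation, which makes the Kummer bound fall short of equality. In the original Shallit conjecture, Zagier~\cite{Zagier:1992} circumvented this with a $3$-adic analytic argument, while Shen~\cite{Shen:2023} later gave a direct combinatorial proof; some combination of those techniques, in tandem with the carry analysis developed for Theorem~\ref{thm:main}, is the natural avenue here. The special case $n \equiv -1 \pmod 6$, with its extra $+1$, corresponds precisely to the situation in which $(n+1)/2$ is divisible by $3$, so that rewriting it as $(n-1)/2 + 1$ triggers a cascading carry in base $3$; reconciling the two formulas into a single clean inductive argument---and certifying the exact value of the valuation in the presence of these carries---will be the most delicate part. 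A shortcut worth seeking first is an unexpected identity expressing $T(n)$ as a product of a single binomial coefficient and a $3$-adic unit, after which Legendre's formula~\eqref{leg2} would immediately supply the answer.
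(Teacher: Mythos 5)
There is nothing in the paper to compare against here: this statement is explicitly posed as an \emph{open question}, and the paper offers no proof of it. Your proposal, read on its own terms, is a research plan rather than a proof, and it contains a gap that is not merely a matter of unfinished bookkeeping. The Kummer/Lucas analysis you lead with is vacuous for this sum: the $k=0$ term equals $1$, so the minimum $3$-adic valuation over the summands is always $0$, whereas the conjectured value $s_3(\lfloor (n+1)/2\rfloor)$ is typically positive. Thus the entire content of the statement lies in cancellation among unit-valuation terms, and the technique that actually powers the paper's results --- isolating a \emph{unique} summand of strictly minimal valuation, as in Lemmas~\ref{lem:main_even} and~\ref{lem:main_odd} --- has no analogue. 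Your own text concedes this (``the Kummer bound fall[s] short of equality''), but then defers the resolution to an unexhibited Zeilberger recurrence whose $3$-adic analysis is never carried out, to an unspecified combination of Zagier's and Shen's methods, and finally to a ``shortcut worth seeking.'' None of these is a step of a proof; each is a pointer to where a proof would have to live.

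If you want to make progress, the honest starting point is to abandon the summand-by-summand valuation bound entirely and look for either (i) a closed form or congruence for $T(n)=\sum_k \binom{n}{k}^3 2^k$ modulo powers of $3$ (the appearance of $s_3(\lfloor (n+1)/2\rfloor)$, i.e., essentially $\nu_3\bigl(\binom{n+1}{\lfloor (n+1)/2\rfloor}\bigr)$ up to normalization via Eq.~\eqref{eq:Kummer_nnover2}, strongly suggests a hidden identity of the kind you mention at the very end), or (ii) a $3$-adically convergent rewriting in the spirit of Zagier's solution of Problem 6625. As it stands, the proposal identifies the difficulty correctly but does not overcome it, so the conjecture remains open.
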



\section{Main results}

In this section, we state the main results of the paper.

\begin{theorem}\label{thm:vp-Legendre_r}
Let $p$ be a prime number and $r$ be a rational number such that $\nu_p(r)\geq 1$. Then for every integer $n\geq 0$, we have
$$
\nu_p(P_n(r)) =
\begin{cases}
    \nu_p({n \choose n/2}), & \text{if $n$ is even and $ p\geq 3$}; \\
    \nu_p({n-1 \choose (n-1)/2}) + \nu_p(r) + \nu_p(n),  & \text{if $n$ is odd and $ p\geq 3$}; \\
    \nu_2({n \choose n/2})-n, & \text{if $n$ is even and $ p=2$}; \\
    \nu_2({n-1 \choose (n-1)/2}) + \nu_2(r) + 1 - n, & \text{if $n$ is odd and $p=2$}.
\end{cases}
$$
Equivalently,
$$\nu_p(P_n(r)) = \nu_p(\frac1{2^n}\binom{n}{\lfloor n/2\rfloor}) + (n\bmod 2) \nu_p(r(n+1)).$$
\end{theorem}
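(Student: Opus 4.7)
The plan is to expand $P_n(r)$ via Rodrigues' formula and then perform a term-by-term $p$-adic analysis. Specifically, writing
\[
P_n(x) \;=\; \frac{1}{2^n}\sum_{j=0}^{\lfloor n/2\rfloor}(-1)^j\binom{n}{j}\binom{2n-2j}{n}\, x^{n-2j},
\]
one would substitute $x=r$ and argue that, because $\nu_p(r)\ge 1$, the lowest-degree term in $r$ is $p$-adically strictly smaller than every other summand. For even $n$ the leading term is $P_n(0)=(-1)^{n/2}\binom{n}{n/2}/2^n$, and its $p$-adic valuation matches the first and third cases of the theorem (the latter after subtracting $n=\nu_2(2^n)$ when $p=2$). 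For odd $n$ the leading term is the linear one, $(-1)^{(n-1)/2}(n+1)\binom{n}{(n-1)/2}\, r/2^n$, and applying the identity $(n+1)\binom{n}{(n-1)/2}=2n\binom{n-1}{(n-1)/2}$ rewrites it as $(-1)^{(n-1)/2}n\binom{n-1}{(n-1)/2}\, r/2^{n-1}$. Taking $\nu_p$ of this yields the second and fourth cases of the theorem, and the same binomial identity also gives the equivalence of the compact formulation stated at the very end.

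The main technical obstacle is proving this strict dominance. Writing $m=\lfloor n/2\rfloor$ and parametrising the remaining summands by $j=m-\ell$ with $\ell\ge 1$, a direct manipulation reduces the ratio of the $(m-\ell)$th term to the leading one to an explicit expression; for even $n$, for instance, one obtains
\[
\frac{\binom{m}{\ell}\binom{2m+2\ell}{2\ell}}{\binom{m+\ell}{\ell}}.
\]
Using Legendre's formula $\nu_p(k!)=(k-s_p(k))/(p-1)$, or equivalently Kummer's theorem, one expresses $\nu_p$ of this ratio as a signed combination of base-$p$ digit sums, and then combines it with the extra $2\ell\,\nu_p(r)\ge 2\ell$ coming from the factor $r^{2\ell}$ to obtain a strictly positive net valuation. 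This combinatorial digit-sum inequality—the heart of the proof—must be verified uniformly for both parities of $n$ and for $p\ge 3$ versus $p=2$; the latter case requires extra bookkeeping for the $2^n$ prefactor, but is structurally identical.

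An alternative route, which I would pursue in parallel, is induction on $n$ using the three-term recurrence $(n+1)P_{n+1}(r)=(2n+1)r\,P_n(r)-nP_{n-1}(r)$. Once the claimed formula is assumed for $P_{n-1}(r)$ and $P_n(r)$, the valuations of the two terms on the right-hand side are explicit, and verifying the formula for $P_{n+1}(r)$ reduces to checking that the combination $(2n+1)r\,P_n(r)-nP_{n-1}(r)$ has $\nu_p$ exactly $\nu_p(n+1)$ more than predicted by the theorem for $P_{n+1}(r)$. Because the two summands will usually have different valuations, the $\nu_p$ of their difference is simply the smaller one, and the required equality should follow from a parity-case analysis together with elementary facts like $\nu_p\binom{n}{n/2} = \nu_p\binom{n-1}{(n-1)/2}+\nu_p\binom{n-1}{(n+1)/2}-\nu_p(n)$ and standard Pascal-type identities; if neither the direct dominance argument nor the induction closes cleanly in some residue class modulo small powers of $p$, one could verify that residue class by hand.
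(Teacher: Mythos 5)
Your overall strategy---expanding $P_n(r)$ via Rodrigues' formula and showing that the lowest-degree term dominates $p$-adically---is exactly the paper's, and your identification of the leading terms is correct (including the identity $(n+1)\binom{n}{(n-1)/2}=2n\binom{n-1}{(n-1)/2}$, which correctly converts the linear coefficient into the form appearing in the theorem). The problem is that the proof has a genuine gap at precisely the point you yourself label ``the heart of the proof'': the dominance inequality, which for even $n=2m$ amounts to
\[
\nu_p\!\left(\frac{\binom{m}{\ell}\binom{2m+2\ell}{2\ell}}{\binom{m+\ell}{\ell}}\right)\;\geq\;1-2\ell
\qquad(1\leq\ell\leq m),
\]
is asserted to follow from ``a signed combination of base-$p$ digit sums'' but is never established. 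This is not a routine verification that can be read off term by term from Legendre's formula applied to the three binomial coefficients: the denominator contributes $-\nu_p\binom{m+\ell}{\ell}$, which by Kummer's theorem can be arbitrarily large for fixed $\ell$ (take $\ell=1$ and $m=p^k-1$, so $\binom{m+\ell}{\ell}=p^k$), so no bound of the form $\geq 1-2\ell$ is available until the numerator is played off against the denominator in a specific way.

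The paper closes this gap by regrouping the ratio as
\[
\frac{a_{2\ell}}{a_0}
=\frac{\prod_{i=1}^{2\ell}(2m+i)}{\prod_{i=1}^{\ell}(m+i)}\cdot\frac{m!}{(m-\ell)!}\cdot\frac{1}{(2\ell)!},
\]
observing that $\prod_{i=1}^{\ell}(m+i)=2^{-\ell}\prod_{i=1}^{\ell}(2m+2i)$ divides $\prod_{i=1}^{2\ell}(2m+i)$, so that the whole ratio is an integer divided by $(2\ell)!$, and then using the single clean bound $\nu_p((2\ell)!)=(2\ell-s_p(2\ell))/(p-1)\leq 2\ell-1$. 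The odd case is handled identically with $a_1$ in place of $a_0$ and $(2\ell+1)!$ in place of $(2\ell)!$ (where for $p=2$ one uses $s_2(2\ell+1)\geq 2$). You would need to supply this cancellation argument, or an equivalent one, to complete the proof. Your fallback route via the three-term recurrence is likewise only a sketch: it runs into the standard difficulty that when the two terms $(2n+1)rP_n(r)$ and $nP_{n-1}(r)$ have equal valuation the ultrametric inequality gives no lower bound, and ``verify that residue class by hand'' is not a proof for infinitely many $n$.
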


\begin{theorem}\label{thm:vp-Legendre_explicit_p3}
Let $p \geq 3$ be a prime number. Then for every integer $m\geq 0$, we have
\begin{align*}
    \nu_p(P_{2m}(p)) &= \nu_p({2m \choose m}); \\
    \nu_p(P_{2m+1}(p)) &= 1 + \nu_p(2m+1) + \nu_p({2m \choose m}).
\end{align*}
Moreover, for every integer $n\geq0$,
$$
\nu_p(P_n(p)) = \frac{2s_p(\lfloor n/2\rfloor) - s_p(n) + (n\bmod 2)p}{p-1}.
$$
\end{theorem}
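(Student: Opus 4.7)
The plan is to derive everything from Theorem~\ref{thm:vp-Legendre_r} by specializing to the rational number $r = p$, which satisfies $\nu_p(r) = 1 \geq 1$, and then to convert the resulting binomial valuations into digit sums via Legendre's formula \eqref{leg2}.

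First I would handle the two explicit identities. Taking $r = p$ and $n = 2m$ in the first case of Theorem~\ref{thm:vp-Legendre_r} (which applies because $p \geq 3$), we immediately get $\nu_p(P_{2m}(p)) = \nu_p(\binom{2m}{m})$. Taking $r = p$ and $n = 2m+1$ in the second case gives $\nu_p(P_{2m+1}(p)) = \nu_p(\binom{2m}{m}) + \nu_p(p) + \nu_p(2m+1) = 1 + \nu_p(2m+1) + \nu_p(\binom{2m}{m})$. So both of the explicit formulae are essentially immediate corollaries; no additional work is needed.

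For the closed form in terms of digit sums, I would apply the standard consequence of \eqref{leg2} that $\nu_p(\binom{a+b}{a}) = (s_p(a) + s_p(b) - s_p(a+b))/(p-1)$. In the even case $n = 2m$ one has $\nu_p(\binom{2m}{m}) = (2 s_p(m) - s_p(2m))/(p-1)$, which already matches the claimed formula since $n \bmod 2 = 0$. In the odd case $n = 2m+1$, after substituting the binomial identity, reducing to a common denominator, and using $\lfloor n/2 \rfloor = m$, what remains to prove is the purely combinatorial identity
$$(p-1)\nu_p(2m+1) + s_p(2m+1) - s_p(2m) = 1.$$

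The only genuinely new step — and the main obstacle, though a mild one — is verifying this digit-sum identity. I would argue by a direct base-$p$ analysis of the transition from $2m+1$ to $2m$: if $\nu_p(2m+1) = k$, then the base-$p$ representation of $2m+1$ ends in $k$ zero digits followed by some nonzero digit $c \geq 1$. Subtracting $1$ turns the $k$ trailing zeros into $k$ digits equal to $p-1$ and decreases the digit at position $k$ from $c$ to $c-1$, leaving all higher digits unchanged. Therefore $s_p(2m) - s_p(2m+1) = k(p-1) - 1$, which rearranges exactly to the displayed identity. Substituting back produces the stated closed form, completing the proof.
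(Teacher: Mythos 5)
Your proposal is correct: both explicit identities are indeed immediate from Theorem~\ref{thm:vp-Legendre_r} with $r=p$, the reduction of the digit-sum formula to the identity $(p-1)\nu_p(2m+1)+s_p(2m+1)-s_p(2m)=1$ is exactly right, and your borrow-counting verification of that identity is sound (with $k=\nu_p(2m+1)$, subtracting $1$ converts $k$ trailing zeros into digits $p-1$ and lowers the digit at position $k$ by one, giving $s_p(2m)-s_p(2m+1)=k(p-1)-1$). The overall structure matches the paper's proof, which also just specializes Theorem~\ref{thm:vp-Legendre_r} at $r=p$; the only divergence is in how the key digit-sum identity is established. The paper packages it as Lemma~\ref{lem:v_odd}, proved via the identity $(2m+1)\binom{2m}{m}=(m+1)\binom{2m+1}{m}$ together with a two-case split on whether $p\divides 2m+1$ (reducing everything to Eq.~\eqref{eq:Kummer_nnover2} and the observations $s_p(2m+1)=s_p(2m)+1$ or $s_p(m+1)=s_p(m)+1$), which as a by-product also records the alternative form $\nu_p(m+1)+\nu_p(\binom{2m+1}{m})$. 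Your direct base-$p$ subtraction argument is more elementary and handles all values of $\nu_p(2m+1)$ uniformly in one stroke, at the cost of not producing that extra intermediate identity; either route is perfectly adequate here.
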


\begin{theorem}\label{thm:vp-Legendre_explicit_p2}
For every integer $n \geq 0$, we have
\[
    \nu_2(P_n(2)) = (n \bmod 2) - \nu_2(n!).
\]
\end{theorem}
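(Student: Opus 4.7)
The plan is to deduce Theorem~\ref{thm:vp-Legendre_explicit_p2} directly from Theorem~\ref{thm:vp-Legendre_r}, since $r=2$ satisfies $\nu_2(r)=1\geq 1$, and then reduce the resulting binomial expressions to a clean expression in $\nu_2(n!)$ via Legendre's formula~\eqref{leg2}.

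First I would split into the two cases given by Theorem~\ref{thm:vp-Legendre_r} applied at $p=2$, $r=2$. For even $n$, the theorem gives $\nu_2(P_n(2))=\nu_2(\binom{n}{n/2})-n$, while for odd $n=2m+1$ it gives $\nu_2(P_n(2))=\nu_2(\binom{2m}{m})+\nu_2(2)+1-n=\nu_2(\binom{2m}{m})+2-n$. So the target identity $\nu_2(P_n(2))=(n\bmod 2)-\nu_2(n!)$ will follow once I show
\[
\nu_2\!\left(\binom{n}{\lfloor n/2\rfloor}\right) \;=\; n - \nu_2(n!) - (n\bmod 2),
\]
separated into the two parities.

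Next I would compute both sides using \eqref{leg2}, i.e., $\nu_2(k!) = k - s_2(k)$ where $s_2$ denotes the binary digit sum. For even $n$, since the binary representation of $n/2$ is just that of $n$ shifted by one, $s_2(n/2)=s_2(n)$, so
$\nu_2(\binom{n}{n/2}) = \nu_2(n!)-2\nu_2((n/2)!) = s_2(n)$, which matches $n-\nu_2(n!)$ as required. For odd $n=2m+1$, I use that $(2m+1)!$ and $(2m)!$ have the same $2$-adic valuation, giving $\nu_2((2m+1)!) = 2m-s_2(m)$, and similarly $\nu_2(\binom{2m}{m}) = s_2(m)$; a direct check then yields $\nu_2(\binom{2m}{m})+2-n = 1-\nu_2((2m+1)!)$, as desired.

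There is no real obstacle here: the content of the theorem is packaged inside Theorem~\ref{thm:vp-Legendre_r}, and what remains is a short bookkeeping argument with Legendre's digit-sum formula. The only point requiring care is the case separation and the observation $s_2(n/2)=s_2(n)$ for even $n$ (respectively $s_2(2m)=s_2(m)$), which lets both cases collapse into the single formula $(n\bmod 2)-\nu_2(n!)$.
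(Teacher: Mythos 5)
Your proof is correct and follows essentially the same route as the paper: specialize Theorem~\ref{thm:vp-Legendre_r} to $r=p=2$ and then reduce via Legendre's formula~\eqref{leg2}, using $s_2(2m)=s_2(m)$ and $\nu_2((2m+1)!)=\nu_2((2m)!)$. One small slip worth fixing: the displayed intermediate identity $\nu_2(\binom{n}{\lfloor n/2\rfloor})=n-\nu_2(n!)-(n\bmod 2)$ is false for odd $n=2m+1$ when $m+1$ is even (the relevant binomial in that case is $\binom{n-1}{(n-1)/2}=\binom{2m}{m}$, not $\binom{2m+1}{m}$), but your actual odd-case computation correctly works with $\binom{2m}{m}$, so the argument stands.
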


\begin{theorem}\label{thm:main}
Let $p \geq 3$ be a prime number and $f(n) := \nu_p(P_n(p))$.
Then for all integers $n\geq 0$ and $0\leq a < p$, we have
\[
    f(pn + a) = \begin{cases}
       f(n) + (n \bmod 2), & \text{if } a \text{ is even};\\
       f(n) + 1 - (n \bmod 2), & \text{if } a \text{ is odd}.
    \end{cases}
\]    
\end{theorem}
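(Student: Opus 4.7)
The plan is to derive Theorem~\ref{thm:main} as an immediate corollary of the closed form provided by Theorem~\ref{thm:vp-Legendre_explicit_p3}, namely
$$f(n) = \frac{2s_p(\lfloor n/2\rfloor) - s_p(n) + (n\bmod 2)\,p}{p-1}.$$
The task then reduces to computing how each of the three ingredients $s_p(n)$, $s_p(\lfloor n/2\rfloor)$, and $(n \bmod 2)$ transforms under $n \mapsto pn + a$, and reading off the difference $f(pn+a) - f(n)$.

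Two of these three transformations are immediate: since $0 \leq a < p$, the base-$p$ expansion of $pn + a$ is the expansion of $n$ with the digit $a$ appended, so $s_p(pn+a) = s_p(n) + a$, and since $p$ is odd, the parity of $pn + a$ coincides with that of $n + a$. The only calculation requiring care is $s_p(\lfloor(pn+a)/2\rfloor)$. Writing $n = 2q + r$ with $r \in \{0,1\}$ and $a = 2b + r'$ with $r' \in \{0,1\}$, a short case analysis over the four combinations of $(r, r')$ shows that
$$\bigl\lfloor (pn+a)/2 \bigr\rfloor = p \lfloor n/2 \rfloor + d$$
for an explicit digit $d \in \{0, 1, \ldots, p-1\}$; concretely, when $n$ is odd the ``extra $1$'' coming from $pn$ being odd contributes either $(p-1)/2$ or $(p+1)/2$ to $d$, depending on the parity of $a$. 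In every subcase $d$ remains strictly less than $p$, so no carry propagates and $s_p(\lfloor(pn+a)/2\rfloor) = s_p(\lfloor n/2\rfloor) + d$.

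Substituting these expressions into the closed form and simplifying the resulting fraction yields, in each of the four parity combinations, exactly the increment $0$ or $1$ predicted by the theorem. The arithmetic works out cleanly because the $(p-1)$ coming from the doubled extra digit, combined with the $p$ contributed by the parity term, forms an integer multiple of the denominator $p-1$. No real conceptual obstacle is expected; the main risk is bookkeeping, in particular keeping track of the allowed ranges of $b$ (namely $b \leq (p-1)/2$ when $a$ is even, and $b \leq (p-3)/2$ when $a$ is odd) to guarantee $d < p$ in every subcase.
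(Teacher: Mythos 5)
Your proposal is correct and follows essentially the same route as the paper: a four-way case analysis on the parities of $n$ and $a$, applying the closed form from Theorem~\ref{thm:vp-Legendre_explicit_p3} together with $s_p(pn+a)=s_p(n)+a$, where the only delicate point is that $\lfloor(pn+a)/2\rfloor$ equals $p\lfloor n/2\rfloor$ plus a single digit $(p\pm1)/2+b<p$ so no carry occurs. You have correctly identified that digit and the required bounds on $b$, so the remaining work is only the bookkeeping you describe.
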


For the special case $p = 3$, with the aid of Eq.~\eqref{useful} for $x = 3$, Theorem~\ref{thm:main} implies Conjecture~\ref{jeffcon} by setting $i=3n+a$ with $0\leq a<3$. The case of $a \in \{0,2\}$ is immediate, while for $a=1$, we get $b(i) = b(n) + 1 - (n\bmod 2)$. The case $a=1$ then follows by writing $n = 3m + b$ and noticing that $n\bmod 2 = m \bmod 2$ when $b \in \{0,2\}$,
while $(m \bmod 2) + (n \bmod 2)=1$ when $b=1$.
We thus recover the recent result of Shen~\cite{Shen:2023}. 

As a corollary of Theorems~\ref{thm:vp-Legendre_explicit_p2} and \ref{thm:main} we also get
\begin{corollary}
    For every prime number $p$, the sequence $(\nu_p(P_n(p)))_{n \geq 0}$ is $p$-regular.
\end{corollary}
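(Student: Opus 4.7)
The plan is to handle the prime $p=2$ and the primes $p\geq 3$ separately, drawing on Theorem~\ref{thm:vp-Legendre_explicit_p2} and Theorem~\ref{thm:main}, respectively. Recall that, by definition, we must show that the $\mathbb{Q}$-vector space spanned by the $p$-kernel $\{(\nu_p(P_{p^e n + i}(p)))_{n\geq 0} : e\geq 0,\ 0\leq i < p^e\}$ has finite rank.

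For $p=2$, I would simply quote Theorem~\ref{thm:vp-Legendre_explicit_p2}, which gives $\nu_2(P_n(2)) = (n \bmod 2) - \nu_2(n!)$. The sequence $(n \bmod 2)_{n\geq 0}$ is patently $2$-regular (its $2$-kernel has rank two), and $(\nu_2(n!))_{n\geq 0}$ is a classical $2$-regular sequence by Eq.~\eqref{leg2}, since $s_2(n)$ is $2$-regular and $n$ is $2$-regular. Since the set of $p$-regular sequences forms a $\mathbb{Q}$-algebra, their difference is $2$-regular.

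For $p\geq 3$, set $f(n):=\nu_p(P_n(p))$, $g(n):=n \bmod 2$, and let $\mathbf{1}$ denote the constant sequence $1$. I would show that the $\mathbb{Q}$-subspace $V$ of $\mathbb{Q}^{\mathbb{N}}$ spanned by $\{f,g,\mathbf{1}\}$ is closed under each decimation operator $T_a\colon c(n)\mapsto c(pn+a)$ with $0\leq a<p$. For $f$, this is exactly the content of Theorem~\ref{thm:main}: the right-hand side is $f(n)+g(n)$ for even $a$ and $f(n)+\mathbf{1}-g(n)$ for odd $a$, both in $V$. For $g$, since $p$ is odd, $g(pn+a)=(n+a)\bmod 2$ equals $g(n)$ when $a$ is even and $\mathbf{1}-g(n)$ when $a$ is odd. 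For $\mathbf{1}$, the decimation is again $\mathbf{1}$.

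From the closure of $V$ under all $T_a$, an easy induction on $e$ shows that every subsequence $(f(p^e n+i))_{n\geq 0}$ lies in $V$. Hence the $\mathbb{Q}$-span of the $p$-kernel of $f$ has rank at most $3$, and so $f$ is $p$-regular. There is no real obstacle: the recurrence of Theorem~\ref{thm:main} has been engineered so that the ``error term'' between $f(pn+a)$ and $f(n)$ already lies in the two-dimensional space $\mathbb{Q}\cdot g \oplus \mathbb{Q}\cdot \mathbf{1}$; the only step requiring a brief verification is the stability of $g$ under $T_a$, which uses the odd parity of $p$.
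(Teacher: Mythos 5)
Your proof is correct and follows the same route the paper intends: the paper states this corollary without a written proof, as an immediate consequence of Theorem~\ref{thm:vp-Legendre_explicit_p2} (for $p=2$, via the known $2$-regularity of $\nu_2(n!)$ and $n\bmod 2$) and of Theorem~\ref{thm:main} (for $p\geq 3$, whose recurrences show the span of $f$, $n\bmod 2$, and the constant sequence $1$ is closed under the decimation operators). Your verification that $T_a$ preserves this three-dimensional space, including the parity bookkeeping that uses the oddness of $p$, is exactly the argument that makes the paper's assertion precise.
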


\begin{theorem}\label{cigler}
Let $M_n(x)$ be defined as in Eq.~\eqref{eq:Cigler_def}.
Then for every integer $n \geq 0$ and prime $p\geq 3$, we have
$$\nu_p(M_n(p)) = \nu_p(P_n(p)).$$
\end{theorem}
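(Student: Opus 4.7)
The plan is to reduce the claim to Theorem~\ref{thm:vp-Legendre_r} via the polynomial identity
$$M_n(x) = (x-2)^n \, P_n\!\left(\frac{x}{x-2}\right). \qquad (\star)$$
Granting $(\star)$, I substitute $x=p$: since $\gcd(p-2,p)=1$ for $p\geq 3$, we have $\nu_p((p-2)^n)=0$, so $\nu_p(M_n(p))=\nu_p(P_n(p/(p-2)))$. The rational $r=p/(p-2)$ satisfies $\nu_p(r)=\nu_p(p)-\nu_p(p-2)=1\geq 1$, so Theorem~\ref{thm:vp-Legendre_r} applies. Its right-hand side depends on $r$ only through $\nu_p(r)$, and both $r=p$ and $r=p/(p-2)$ yield $\nu_p(r)=1$; hence $\nu_p(P_n(p/(p-2)))=\nu_p(P_n(p))$, which finishes the argument.

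The substance of the proof therefore lies in establishing $(\star)$. My preferred route is via hypergeometric representations: by definition $M_n(x)={}_2F_1(-n,-n;1;x-1)$, while the Legendre polynomials satisfy $P_n(y)={}_2F_1(-n,n+1;1;(1-y)/2)$. Pfaff's transformation
$${}_2F_1(a,b;c;z)=(1-z)^{-a}\,{}_2F_1(a,c-b;c;z/(z-1))$$
applied with $a=b=-n$, $c=1$, $z=x-1$ gives
$$M_n(x)=(2-x)^n\,{}_2F_1\!\left(-n,n+1;1;\tfrac{x-1}{x-2}\right)=(2-x)^n\,P_n\!\left(-\tfrac{x}{x-2}\right),$$
and $(\star)$ follows from the parity relation $P_n(-y)=(-1)^n P_n(y)$. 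A self-contained alternative is to expand $(x-2)^n P_n(x/(x-2))$ using the series $P_n(y)=\sum_k\binom{n}{k}\binom{n+k}{k}((y-1)/2)^k$, obtaining $\sum_k\binom{n}{k}\binom{n+k}{k}(x-2)^{n-k}$, and then match powers of $(x-1)$ against $M_n(x)=\sum_j\binom{n}{j}^2(x-1)^j$; this reduces $(\star)$ to a finite binomial identity provable by a Vandermonde-type manipulation (or mechanically via Zeilberger's algorithm).

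The main obstacle is recognizing the identity $(\star)$. After that, the $p$-adic conclusion drops out of Theorem~\ref{thm:vp-Legendre_r} essentially for free, the key observation being simply that $p/(p-2)$ has the same $p$-adic valuation as $p$.
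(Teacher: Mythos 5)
Your proof is correct and follows essentially the same route as the paper: your identity $(\star)$ is, via the parity relation $P_n(-y)=(-1)^nP_n(y)$, exactly the paper's key identity $M_n(x)=(2-x)^nP_n\left(\frac{x}{2-x}\right)$, and the $p$-adic conclusion is drawn in the same way from Theorem~\ref{thm:vp-Legendre_r}, using that its right-hand side depends on $r$ only through $\nu_p(r)$ and that $\nu_p(p/(p-2))=1$. The only (minor) difference is that the paper obtains the identity immediately by substituting $x/(2-x)$ into the Rodrigues-type formula \eqref{eq:Rodrigues_Cigler}, which is a bit more direct than invoking Pfaff's transformation.
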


\section{Background and preliminary results}


There exist many formulae for Legendre polynomials $P_n(x)$, including the following identity~\cite{Koepf2014}:
\begin{equation} \label{useful}
    P_n(x) = \sum_{0\leq k\leq n} {n \choose k} {{n+k} \choose k} \left(\frac{x-1}2\right)^k,
\end{equation}
which is \cite[\S 93, p.~166, Eq.~(2)]{Rainville:1960} (expressed there in the form of hypergeometric series).
The following two formulas for Legendre polynomials are due to Rodrigues~\cite{Rodrigues:1816}:
\begin{align}
\label{eq:Rodrigues}
    P_n(x) &= \frac{1}{2^n} \sum_{k=0}^{\floor{n/2}} (-1)^k {n \choose k} {2n - 2k \choose n} x^{n-2k};\\
\label{eq:Rodrigues_Cigler}
    P_n(x) &= \frac{1}{2^n} \sum_{k=0}^n {n \choose k}^2 (x-1)^k (x+1)^{n-k}.
\end{align}

Let $p$ be a prime.
As before, let $s_p(n)$ denote the sum of the base-$p$ digits of $n$. 
From Eq.~\eqref{leg2}, for integers $n\geq k\geq 0$ we easily obtain
\begin{equation}\label{eq:Kummer_nnover2}
    \nu_p(\binom{n}{k}) = \nu_p(\frac{n!}{k!(n-k)!}) = \frac{s_p(k) + s_p(n-k) - s_p(n)}{p-1}.
\end{equation}
Moreover, for $n \geq 0$ and $0 \leq a < p$, we have
\begin{equation}\label{eq:Sp_pna}
    s_p(np + a) = s_p(n) + a.
\end{equation}

The following lemma, together with Eq.~\eqref{eq:Kummer_nnover2}, will be a key to expressing $\nu_p(P_n(p))$ in terms of $s_p$.

\begin{lemma}\label{lem:v_odd}
For every prime $p$ and every integer $m\geq 0$,
$$
\nu_p(2m+1) + \nu_p(\binom{2m}{m}) = \nu_p(m+1) + \nu_p(\binom{2m+1}{m}) = \frac{2s_p(m)-s_p(2m+1)+1}{p-1}.
$$
\end{lemma}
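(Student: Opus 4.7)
The lemma breaks into two equalities, and I would handle them in order.

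For the first equality, I would invoke the multiplicative relation
$$\binom{2m+1}{m} = \frac{2m+1}{m+1}\binom{2m}{m},$$
which follows at once from the factorial formulas $\binom{2m+1}{m} = (2m+1)!/(m!(m+1)!)$ and $\binom{2m}{m} = (2m)!/(m!\,m!)$. Taking $\nu_p$ of both sides gives $\nu_p(\binom{2m+1}{m}) = \nu_p(2m+1) - \nu_p(m+1) + \nu_p(\binom{2m}{m})$, which rearranges to exactly the first equality of the lemma. Note this step is valid for any prime $p$ and makes no use of the parity of $m$.

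For the second equality, I would reduce everything to $s_p$ via Legendre's formula~\eqref{leg2} and Eq.~\eqref{eq:Kummer_nnover2}. Working with the left-hand expression $\nu_p(2m+1) + \nu_p(\binom{2m}{m})$, I would write
$$\nu_p(2m+1) = \nu_p((2m+1)!) - \nu_p((2m)!) = \frac{(2m+1)-s_p(2m+1)}{p-1} - \frac{2m-s_p(2m)}{p-1} = \frac{1+s_p(2m)-s_p(2m+1)}{p-1},$$
and then apply Eq.~\eqref{eq:Kummer_nnover2} to get $\nu_p(\binom{2m}{m}) = \frac{2s_p(m) - s_p(2m)}{p-1}$. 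Adding these cancels the $s_p(2m)$ terms and yields $\frac{2s_p(m) - s_p(2m+1) + 1}{p-1}$, which is the claimed closed form.

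There is no real obstacle here: the lemma is a manipulation of Legendre's formula, and the only small subtlety is choosing which half of the first equality is easier to reduce to $s_p$-data. I find the ``$\nu_p(2m+1) + \nu_p(\binom{2m}{m})$'' side more convenient since $s_p(2m)$ appears once with each sign and cancels cleanly, whereas on the ``$\nu_p(m+1) + \nu_p(\binom{2m+1}{m})$'' side one instead sees $s_p(m+1)$ cancel. Either route works identically.
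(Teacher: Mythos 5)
Your proof is correct. The first equality is handled exactly as in the paper, via the identity $(2m+1)\binom{2m}{m}=(m+1)\binom{2m+1}{m}$. For the second equality you take a mildly but genuinely different route: you express $\nu_p(2m+1)=\nu_p((2m+1)!)-\nu_p((2m)!)$ and apply Legendre's formula \eqref{leg2} to both factorials, obtaining the uniform identity $\nu_p(2m+1)=\frac{1+s_p(2m)-s_p(2m+1)}{p-1}$, after which the $s_p(2m)$ terms cancel against $\nu_p(\binom{2m}{m})=\frac{2s_p(m)-s_p(2m)}{p-1}$. The paper instead splits into two cases according to whether $p$ divides $2m+1$: when it does not, it uses $s_p(2m+1)=s_p(2m)+1$ and works with the left-hand expression; when it does, it switches to the middle expression $\nu_p(m+1)+\nu_p(\binom{2m+1}{m})$ and uses $s_p(m+1)=s_p(m)+1$. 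Your version avoids the case analysis entirely and is arguably cleaner; what the paper's version buys is that it never needs the "Legendre formula for a single integer" observation and stays entirely within elementary digit-sum manipulations of the form $s_p(n+1)=s_p(n)+1$ when $p\nmid n+1$. Both are complete and correct.
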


\begin{proof} The first equality follows from the identity $(2m+1)\binom{2m}{m}=(m+1)\binom{2m+1}{m}$. To prove the second equality, we consider two cases depending on whether $p$ divides $2m+1$.

When $\nu_p(2m+1)=0$, we have $s_p(2m+1) = s_p(2m)+1$ and use Eq.~\eqref{eq:Kummer_nnover2} to get
$$
\nu_p(2m+1) + \nu_p(\binom{2m}{m}) =
\frac{2s_p(m)-s_p(2m)}{p-1} = \frac{2s_p(m)-s_p(2m+1)+1}{p-1}.
$$

When $\nu_p(2m+1)\geq 1$, we have $\nu_p(m+1)=0$ and thus $s_p(m+1)=s_p(m)+1$, implying that
$$
\nu_p(m+1) + \nu_p(\binom{2m+1}{m}) 
= \frac{s_p(m)+s_p(m+1)-s_p(2m+1)}{p-1} = \frac{2s_p(m)-s_p(2m+1)+1}{p-1}.
$$
\end{proof}

In view of formula~\eqref{eq:Rodrigues}, let us define
\begin{equation}\label{eq:def_Q}
    Q_n(x) 
    := 2^n P_n(x) 
    = \sum_{k=0}^{\floor{n/2}} (-1)^k {n \choose k} {2n - 2k \choose n} x^{n-2k}.
\end{equation}

\begin{lemma}\label{lem:main_even}
Let $p$ be a prime number and $r$ be a rational number such that $\nu_p(r) \geq 1$. Then for every integer $m\geq 0$, we have
\begin{align*}
    \nu_p(Q_{2m}(r)) &= \nu_p({2m \choose m}).
\end{align*}
\end{lemma}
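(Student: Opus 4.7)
The plan is to apply the ultrametric property of $\nu_p$ to the Rodrigues expansion~\eqref{eq:def_Q} of $Q_{2m}(r)$. Setting $c_j := \binom{2m}{m-j}\binom{2m+2j}{2m}$ and reindexing via $j = m - k$, we obtain
\[
    Q_{2m}(r) = \sum_{j=0}^{m}(-1)^{m-j}\,c_j\, r^{2j}, \qquad c_0 = \binom{2m}{m}.
\]
The $j = 0$ summand has $p$-adic valuation exactly $\nu_p\binom{2m}{m}$, so by the ultrametric inequality the lemma reduces to proving $\nu_p(c_j\, r^{2j}) > \nu_p\binom{2m}{m}$ for every $j \in \{1, 2, \ldots, m\}$.

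The main step is to bound $\nu_p(c_j/c_0)$ from below using the recurrence
\[
    \frac{c_j}{c_{j-1}} = \frac{(m-j+1)(2m+2j-1)}{j(2j-1)},
\]
obtained from direct manipulation of factorials. Telescoping yields $c_j/c_0 = \prod_{i=1}^{j}(m-i+1)(2m+2i-1)/(i(2i-1))$, and since every factor in the numerators is a positive integer (hence has nonnegative $p$-adic valuation), we get the crude bound
\[
    \nu_p(c_j/c_0) \;\geq\; -\,\nu_p\bigl(j!\,(2j-1)!!\bigr) \;=\; j\,\nu_p(2) - \nu_p((2j)!),
\]
where the last equality uses the identity $j!\,(2j-1)!! = (2j)!/2^{j}$. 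Combining with $\nu_p(r^{2j}) \geq 2j$ (from $\nu_p(r) \geq 1$) gives
\[
    \nu_p(c_j\, r^{2j}/c_0) \;\geq\; 2j + j\,\nu_p(2) - \nu_p((2j)!).
\]

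It remains to verify that this integer lower bound is strictly positive for every $j \geq 1$ and every prime $p$. Applying Legendre's formula $\nu_p((2j)!) = (2j - s_p(2j))/(p-1)$ from~\eqref{leg2}: for odd $p$ (so $\nu_p(2) = 0$) the bound equals $(2j(p-2) + s_p(2j))/(p-1)$, which is positive since $p \geq 3$ and $s_p(2j) \geq 1$; for $p = 2$ the bound simplifies to $2j + j - (2j - s_2(j)) = j + s_2(j) \geq 2$. Hence $\nu_p(c_j\, r^{2j}) > \nu_p\binom{2m}{m}$ for all $j \geq 1$, and the ultrametric argument gives $\nu_p(Q_{2m}(r)) = \nu_p\binom{2m}{m}$. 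The main (and essentially only) nontrivial obstacle is deriving the recurrence $c_j/c_{j-1}$; the remaining digit-sum estimates are routine consequences of Legendre's formula.
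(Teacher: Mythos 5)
Your proof is correct and follows essentially the same approach as the paper: isolate the constant term $(-1)^m\binom{2m}{m}$, show every other term has strictly larger valuation by bounding the coefficient ratio from below by $-\nu_p((2j)!)$ via Legendre's formula, and conclude with the ultrametric inequality. The only (cosmetic) difference is that you establish the integrality of the numerator via a telescoping ratio $c_j/c_{j-1}$ rather than by direct factorial regrouping, which even gives a marginally sharper bound at $p=2$.
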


\begin{proof}
Since $Q_0(r) = 1$, the lemma statement is clearly true for $m = 0$. 
From now on, let $m\geq 1$.
By the definition \eqref{eq:def_Q} of $Q_n(x)$, we have
\[
    Q_{2m}(x) = \sum_{i = 0}^{2m} a_i x^i,
\]
where
\[
    a_i := \begin{cases}
        (-1)^{m-i/2}{2m \choose m-i/2} {2m + i \choose 2m}, & \text{if $i$ is even};\\
        0, & \text{if $i$ is odd}.
    \end{cases}
\]
In particular, we have $a_0 = (-1)^m {2m \choose m}$, and so
\begin{equation}\label{eq:vp_a0p}
    \nu_p(a_0) 
    = \nu_p({2m \choose m}).
\end{equation}
Now our goal is to show 
\begin{equation}\label{eq:wts_ai_p_even}
    \nu_p(a_i r^i) > \nu_p(a_0) 
    \quad \text{for all even } 2 \leq i \leq 2m.
\end{equation}
Let $i = 2j$.
Equivalently to \eqref{eq:wts_ai_p_even}, we need to show that
\begin{equation*}
    \nu_p (a_{2j}/ a_0) \geq - 2j \cdot \nu_p(r) + 1
    \quad \text{for all } 1 \leq j \leq m.
\end{equation*}
Since $\nu_p(r)\geq 1$, it suffices to show that
\begin{equation}\label{eq:wts_ai_p_2_even}
    \nu_p (a_{2j}/ a_0) \geq - 2j + 1
    \quad \text{for all } 1 \leq j \leq m.
\end{equation}
Expanding $a_{2j}/ a_0$, and regrouping terms, we obtain
\begin{align*}
    \abs{a_{2j}/ a_0} 
    &= {2m \choose m-j} {2m + 2j \choose 2m} \cdot {2m \choose m}^{-1}\\
    &= \frac{(2m)!}{(m-j)! (m+j)!} \cdot \frac{(2m+2j)!}{(2m)! (2j)!} 
        \cdot \frac{m! m!}{(2m)!}\\[.1in]
    &= \frac{(2m+2j)!m!}{(2m)!(m+j)!} \cdot \frac{m!}{(m-j)!} \cdot \frac{1}{(2j)!}\\
    &= \frac{\prod_{\ell=1}^{2j}(2m+\ell)}{\prod_{\ell=1}^j(m+\ell)} \cdot \frac{m!}{(m-j)!} \cdot \frac{1}{(2j)!}.
\end{align*}
It is easy to see that $\prod_{\ell=1}^j(m+\ell)=\frac1{2^j}\prod_{\ell=1}^j(2m+2\ell)$ divides $\prod_{\ell=1}^{2j}(2m+\ell)$, implying that 
$a_{2j}/ a_0 = b/(2j)!$ for some non-zero integer $b$.
By Legendre's formula \eqref{leg2} we have
\[
    \nu_p((2j)!)
    = \frac{2j - s_p(2j)}{p-1}
    \leq 2j-1.
\]
Thus,
    $\nu_p(a_{2j}/ a_0) = \nu_p(b) - \nu_p((2j)!) \geq -2j + 1$,
and so inequalities \eqref{eq:wts_ai_p_2_even} and \eqref{eq:wts_ai_p_even} hold, implying that $\nu_p(Q_{2m}(r)) = \nu_p(a_0)$, which together with Eq.~\eqref{eq:vp_a0p} completes the proof.
\end{proof}

\begin{lemma}\label{lem:main_odd}
Let $p$ be a prime number,  $m\geq 0$ be an integer, and $r$ be a rational number such that $\nu_p(r) \geq 1$. When $p\geq 3$, we have
\begin{align*}
    \nu_p(Q_{2m+1}(r)) &= \nu_p(r) + \nu_p(2m+1) + \nu_p({2m \choose m}),
\end{align*}
and for $p = 2$,
\[
    \nu_2(Q_{2m+1}(r)) = 1 + \nu_2(r) + \nu_2({2m \choose m}).
\]
\end{lemma}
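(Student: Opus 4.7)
The plan is to mirror the proof of Lemma~\ref{lem:main_even}: expand $Q_{2m+1}(r)$ as a polynomial in $r$ supported on odd degrees, and show that the lowest-order term $a_1 r$ dominates the $p$-adic valuation. By \eqref{eq:def_Q}, writing $Q_{2m+1}(x)=\sum_{j=0}^{m}a_{2j+1}x^{2j+1}$, we have
\[
a_{2j+1} = (-1)^{m-j}\binom{2m+1}{m-j}\binom{2m+2+2j}{2m+1},
\]
so that $a_1 = (-1)^m(2m+2)\binom{2m+1}{m}$.

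First I would verify that $\nu_p(a_1 r)$ matches the target. By Lemma~\ref{lem:v_odd} we have $\nu_p(\binom{2m+1}{m})+\nu_p(m+1)=\nu_p(\binom{2m}{m})+\nu_p(2m+1)$, and together with $\nu_p(2m+2)=\nu_p(2)+\nu_p(m+1)$ this yields
\[
\nu_p(a_1 r) = \nu_p(r)+\nu_p(2)+\nu_p(2m+1)+\nu_p(\binom{2m}{m}).
\]
For $p\geq 3$, $\nu_p(2)=0$ and we recover the first claim; for $p=2$, $\nu_2(2)=1$ and $\nu_2(2m+1)=0$, giving the second.

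The main step is then to show that for every $j\geq 1$, the summand $a_{2j+1} r^{2j+1}$ has strictly larger $p$-adic valuation than $a_1 r$. Expanding $a_{2j+1}/a_1$ and cancelling factorials produces the product $\prod_{\ell=1}^{2j}(2m+2+\ell)$; splitting this into its odd factors $\prod_{k=1}^{j}(2m+2k+1)$ and its even factors $2^j\prod_{k=1}^{j}(m+k+1)$, the even factors telescope against $(m+1)!/(m+1+j)!$ to leave precisely a factor of $2^j$. The result is
\[
\frac{a_{2j+1}}{a_1} = (-1)^j\,\frac{2^j\cdot\prod_{k=1}^{j}(2m+2k+1)\cdot m!/(m-j)!}{(2j+1)!},
\]
whose numerator is a nonzero integer. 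Legendre's formula \eqref{leg2} gives $\nu_p((2j+1)!)\leq 2j/(p-1)$, which is $\leq j$ for $p\geq 3$; for $p=2$ the factor $2^j$ in the numerator offsets the worst-case bound $\nu_2((2j+1)!)\leq 2j$. Thus in both cases $\nu_p(a_{2j+1}/a_1)\geq -j$, and combined with $\nu_p(r)\geq 1$ this yields $\nu_p(a_{2j+1}r^{2j+1})-\nu_p(a_1 r)\geq -j+2j=j\geq 1$, so by the ultrametric inequality $\nu_p(Q_{2m+1}(r))=\nu_p(a_1 r)$.

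The hard part will be the algebraic simplification leading to the displayed form of $a_{2j+1}/a_1$, and in particular noticing that the factor $2^j$ survives the cancellation; without it, the Legendre bound would be $j$ units too weak for $p=2$ and the argument would collapse. Once this factor is isolated, the rest is routine $p$-adic bookkeeping, and the case $m=0$ (where the sum over $j\geq 1$ is empty and $Q_1(r)=a_1 r$) is handled directly by the $a_1 r$ computation.
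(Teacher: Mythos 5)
Your proposal is correct and follows essentially the same route as the paper's proof: expand $Q_{2m+1}$, identify $a_1=(-1)^m(2m+2)\binom{2m+1}{m}=\pm 2(2m+1)\binom{2m}{m}$ as the dominant term, and bound $\nu_p(a_{2j+1}/a_1)$ via Legendre's formula. The only (harmless) difference is in the $p=2$ bookkeeping: you extract the factor $2^j$ explicitly to get $\nu_p(a_{2j+1}/a_1)\geq -j$, whereas the paper keeps the numerator as an unspecified integer $b$ and instead uses $s_2(2j+1)\geq 2$ to sharpen the Legendre bound to $\nu_2((2j+1)!)\leq 2j-1$; both yield the required strict domination of $a_1r$.
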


\begin{proof}
The proof idea is similar to that of Lemma~\ref{lem:main_even}. 
Since $Q_1(r) = 2r$, the statement of the lemma is clearly true for $m = 0$. From now on, let $m\geq 1$.
By the definition \eqref{eq:def_Q}, we have
\[
    Q_{2m+1}(x) = \sum_{i = 0}^{2m+1} a_i x^i,
\]
where
\[
    a_i = \begin{cases}
        0, & \text{if $i$ is even};\\
        (-1)^{m-(i-1)/2} {2m+1 \choose m-(i-1)/2} {2m+1 + i \choose 2m+1}, & \text{if $i$ is odd}.
    \end{cases}
\]
In particular, we have $a_1=2 (2m+1) {2m \choose m}$, and so
\begin{equation}\label{eq:vp_a1p}
    \nu_p(a_1 r) 
    = \begin{cases}
        \nu_p(r) + \nu_p(2m+1) + \nu_p({2m \choose m}), & \text{if } p \geq 3;\\
        1 + \nu_p(r) + \nu_p({2m \choose m}), & \text{if } p = 2.
    \end{cases}
\end{equation}
Now our goal is to show 
\begin{equation}\label{eq:wts_ai_p}
    \nu_p(a_i r^i) > \nu_p(a_1 r) 
    \quad \text{for all odd } i,\ 3\leq i \leq 2m+1.
\end{equation}
Let $i = 2j + 1$.
Equivalently to \eqref{eq:wts_ai_p}, we need to show
\begin{equation*}
    \nu_p (a_{2j+1}/ a_1) \geq - 2j \cdot \nu_p(r) + 1
    \quad \text{for all } j,\ 1 \leq j \leq m.
\end{equation*}
Since $\nu_p(r)\geq 1$, it suffices to show that
\begin{equation}\label{eq:wts_ai_p_2}
    \nu_p (a_{2j+1}/ a_1) \geq - 2j + 1
    \quad \text{for all } j,\ 1 \leq j \leq m.
\end{equation}
Expanding $a_{2j+1}/ a_1$, and regrouping terms, we obtain
\begin{align*}
    \abs{a_{2j+1}/ a_1} 
    &= {2m+1 \choose m-j} {2m + 2j + 2 \choose 2m+1} \cdot \left( 2 (2m+1) {2m \choose m} \right)^{-1}\\
    &= \frac{(2m+1)!}{(m-j)! (m+j+1)!} \cdot \frac{(2m+2j+2)!}{(2m+1)! (2j+1)!} 
        \cdot \frac{1}{2(2m+1)} \cdot \frac{m! m!}{(2m)!}\\[.1in]
    &= \frac{(2m+2j+2)!m!}{2(2m+1)!(m+j+1)!} \cdot \frac{m!}{(m-j)!} \cdot \frac{1}{(2j+1)!}\\
    &= \frac{\prod_{\ell=1}^{2j+2}(2m+\ell)}{2\prod_{\ell=1}^{j+1}(m+\ell)} \cdot \frac{m!}{(m-j)!} \cdot \frac{1}{(2j+1)!}.
\end{align*}
Since $2\prod_{\ell=1}^{j+1}(m+\ell)=\frac1{2^j}\prod_{\ell=1}^{j+1}(2m+2\ell)$ divides $\prod_{\ell=1}^{2j+2}(2m+\ell)$, it follows that $a_{2j+1}/ a_1 = b/(2j+1)!$ for some non-zero integer $b$.
By Legendre's formula~\eqref{leg2}, we have
\[
    \nu_p((2j+1)!)
    = \frac{2j+1 - s_p(2j+1)}{p-1}
    \leq 2j-1,
\]
where we used that $s_p(2j+1) \geq 1$ and $p-1\geq 2$ when $p\geq 3$, and $s_2(2j+1) \geq 2$ when $p=2$.
Thus, 
$\nu_p(a_{2j+1}/ a_1) = \nu_(b) - \nu_p((2j+1)!) \geq -2j + 1$,
and so inequalities \eqref{eq:wts_ai_p_2} and \eqref{eq:wts_ai_p} hold, 
implying that $\nu_p(Q_{2m+1}(r)) = \nu_p(a_1 r)$, which together with \eqref{eq:vp_a1p} completes the proof.
\end{proof}


\section{Proofs of the main results}
\label{proofs}

Theorem~\ref{thm:vp-Legendre_r} follows directly from Lemmas~\ref{lem:main_even} and \ref{lem:main_odd}. It further implies
Theorem~\ref{thm:vp-Legendre_explicit_p3} by setting $r = p$ and using Lemma~\ref{lem:v_odd}.

We now prove Theorem~\ref{thm:vp-Legendre_explicit_p2}.
    
\begin{proof}[Proof of Theorem~\ref{thm:vp-Legendre_explicit_p2}.]
Using Eqs.~\eqref{eq:Kummer_nnover2}, \eqref{eq:Sp_pna}, and \eqref{leg2} for $p=2$ and $a=0$, we have
$$
\nu_2(\binom{2m}{m}) = 2s_2(m) - s_2(2m) = s_2(2m) = 2m - v_2((2m)!).
$$
Combining this with 
Theorem~\ref{thm:vp-Legendre_r} for $r=p=2$, we have
$$
\nu_2(P_n(2)) =
\begin{cases}
    -v_2(n!) & \text{if $n$ is even}; \\
    -v_2((n-1)!) + 1 & \text{if $n$ is odd}.
\end{cases}
$$
It remains to note that for an odd $n$, $v_2((n-1)!)=v_2(n!)$, which then allows to combine the two cases as $\nu_2(P_n(2)) = (n\bmod 2)-v_2(n!)$.
\end{proof}

Next, we tackle Theorem~\ref{thm:main}.
\begin{proof}[Proof of Theorem~\ref{thm:main}.]
As the theorem statement stipulates, let $p \geq 3$ be a prime number and $f(n) := \nu_p(P_n(p))$.
To evaluate $f(pn + a)$ for $n\geq 0$ and $0\leq a < p$, we distinguish between four cases, according to the parities of $n$ and $a$.

\medskip\noindent\textit{Case 1:} both $n = 2m$ and $a=2b$ are even. 
Using Theorem~\ref{thm:vp-Legendre_explicit_p3} and Eq.~\eqref{eq:Sp_pna}, we get 
\begin{align}\label{eq:case1}
    f(2(pm + b)) 
    &= \frac{2 s_p(pm + b) - s_p(2pm + 2b)}{p-1}
    = \frac{2 (s_p(m) + b) - (s_p(2m) + 2b) }{p-1} \notag\\
    &= \frac{2 s_p(m) - s_p(2m)}{p-1}
    = f(2m).
\end{align}

\medskip\noindent\textit{Case 2:} $n = 2m$ is even and $a= 2b+1$ is odd. 
Since $\nu_p(2pm + 2b + 1)=0$, Theorem~\ref{thm:vp-Legendre_explicit_p3} and Eq.~\eqref{eq:case1} yield
\begin{align*}
    f(2(pm + b) + 1) 
    &= 1 + \nu_p(2pm + 2b + 1) + \nu_p( {2(pm + b) \choose pm + b} )\\
    & = 1 + \nu_p( {2(pm + b) \choose pm + b} ) 
    =1 + f(2(pm + b))  \\
    &= 1 + f(2m).
\end{align*}

\medskip\noindent\textit{Case 3:} $n= 2m+1$ is odd and $a=2b$ is even. 
By Theorem~\ref{thm:vp-Legendre_explicit_p3} and Eq.~\eqref{eq:Sp_pna}, we have
\begin{align*}
f(p(2m + 1) + 2b) 
&= \frac{2 s_p(pm + b + \frac{p-1}{2}) - s_p(p(2m + 1) + 2b) + p}{p-1} \\
&= 1 + \frac{2s_p(m) - s_p(2m+1) + p}{p-1} 
= 1 + f(2m+1).
\end{align*}

\medskip\noindent\textit{Case 4:} both $n=2m+1$ and $a=2b+1$ are odd. 
Since $b<\frac{p-1}2$, by Theorem~\ref{thm:vp-Legendre_explicit_p3} and Eq.~\eqref{eq:Sp_pna}, we have
\begin{align*}
    f(p(2m+1) + 2b+1)
    &= \frac{2 s_p(pm + b + \frac{p+1}{2}) - s_p(p(2m+1) + 2b+1)}{p-1} \\
    &= \frac{2s_p(m) - s_p(2m+1) + p}{p-1}     = f(2m+1).
\end{align*}
\end{proof}

Finally, we prove Theorem~\ref{cigler}, thus resolving the question of Cigler.
\begin{proof}[Proof of Theorem~\ref{cigler}.]
First, note that by substituting $x/(2-x)$ in Eq.~\eqref{eq:Rodrigues_Cigler} and comparing it with Eq.~\eqref{eq:Cigler_def}, we get
\begin{equation}\label{eq:MA}
    M_n(x) = (2-x)^n P_n\left(\frac{x}{2-x}\right).
\end{equation}
For a prime $p\geq 3$, Eq.~\eqref{eq:MA} implies
\[
    \nu_p(M_n(p)) = \nu_p(P_n\left(\frac{p}{2-p}\right)).
\]
Now set $r_1 = p$ and $r_2 = p/(p-2)$, and note that $\nu_p(r_1) = \nu_p(r_2) = 1$.
Then by Theorem~\ref{thm:vp-Legendre_r}, $\nu_p(P_n(r_2)) = \nu_p(P_n(r_1))$, implying that $\nu_p(M_n(p)) =  \nu_p(P_n(p))$.
\end{proof}

\bibliographystyle{habbrv}
\bibliography{refs}

\begin{thebibliography}{10}
\expandafter\ifx\csname url\endcsname\relax
  \def\url#1{\texttt{#1}}\fi
\expandafter\ifx\csname doi\endcsname\relax
  \def\doi#1{\burlalt{doi:#1}{http://dx.doi.org/#1}}\fi
\expandafter\ifx\csname urlprefix\endcsname\relax\def\urlprefix{URL: }\fi
\expandafter\ifx\csname href\endcsname\relax
  \def\href#1#2{#2}\fi
\expandafter\ifx\csname burlalt\endcsname\relax
  \def\burlalt#1#2{\href{#2}{#1}}\fi

\bibitem{Aidagulov&Alekseyev:2018}
R.~R. Aidagulov and M.~A. Alekseyev.
\newblock On $p$-adic approximation of sums of binomial coefficients.
\newblock {\em J. Math. Sci.}, 233(5):626--634, 2018.
\newblock \doi{10.1007/s10958-018-3948-0}.

\bibitem{Allouche&Shallit:1992}
J.-P. Allouche and J.~Shallit.
\newblock The ring of {$k$}-regular sequences.
\newblock {\em Theoret. Comput. Sci.}, 98(2):163--197, 1992.
\newblock \doi{10.1016/0304-3975(92)90001-V}.

\bibitem{Allouche&Shallit:2003b}
J.-P. Allouche and J.~Shallit.
\newblock {\em Automatic Sequences}.
\newblock Cambridge University Press, Cambridge, 2003.
\newblock \doi{10.1017/CBO9780511546563}.

\bibitem{Allouche&Shallit:2003a}
J.-P. Allouche and J.~Shallit.
\newblock The ring of {$k$}-regular sequences. {II}.
\newblock {\em Theoret. Comput. Sci.}, 307(1):3--29, 2003.
\newblock \doi{10.1016/S0304-3975(03)00090-2}.

\bibitem{Almodovar:2019}
L.~Almodovar, A.~N. Byrnes, J.~Fink, X.~Guan, A.~Kesarwani, G.~Lavigne, L.~A.
  Medina, V.~H. Moll, I.~Nogues, S.~Rajasekaran, E.~Rowland, and A.~Yuan.
\newblock A closed-form solution might be given by a tree. {V}aluations of
  quadratic polynomials.
\newblock {\em Sci. Ser. A Math. Sci. (N.S.)}, 29:11--28, 2019.

\bibitem{Bell:2007}
J.~P. Bell.
\newblock {$p$}-adic valuations and {$k$}-regular sequences.
\newblock {\em Discrete Math.}, 307(23):3070--3075, 2007.
\newblock \doi{10.1016/j.disc.2007.03.009}.

\bibitem{Boros&Moll&Shallit:2000}
G.~Boros, V.~Moll, and J.~Shallit.
\newblock The 2-adic valuation of the coefficients of a polynomial.
\newblock {\em Sci. Ser. A Math. Sci. (N.S.)}, 7:37--50, 2001.

\bibitem{Cigler:2017}
J.~Cigler.
\newblock $p$-adic valuation of {$P_n(p)$} if {$P_n$} is the $n$th {Legendre}
  polynomial.
\newblock Post on MathOverflow, May 7 2017. Available at
  \url{https://mathoverflow.net/q/269212}.

\bibitem{Givens&Moll:2020}
B.~Givens and V.~H. Moll.
\newblock Integrals of {F}ibonacci polynomials and their valuations.
\newblock {\em Fibonacci Quart.}, 58(3):261--273, 2020.

\bibitem{Koepf2014}
W.~Koepf.
\newblock {\em Hypergeometric Summation. An Algorithmic Approach to Summation
  and Special Function Identities}.
\newblock Springer London, 2nd edition, 2014.
\newblock \doi{10.1007/978-1-4471-6464-7}.

\bibitem{Kummer:1852}
E.~E. Kummer.
\newblock {\"Uber} die {E}rg\"anzungss\"atze zu den allgemeinen
  {R}eciprocit\"atsgesetzen.
\newblock {\em J. Reine Angew. Math.}, 44:93--146, 1852.
\newblock \doi{10.1515/crll.1852.44.93}.

\bibitem{Legendre:1830}
A.-M. Legendre.
\newblock {\em Th\'eorie des Nombres}.
\newblock Firmin Didot Fr\`eres, Paris, 1830.

\bibitem{Lengyel:1995}
T.~Lengyel.
\newblock The order of the {F}ibonacci and {L}ucas numbers.
\newblock {\em Fibonacci Quart.}, 33(3):234--239, 1995.

\bibitem{Marques&Lengyel:2014}
D.~Marques and T.~Lengyel.
\newblock The 2-adic order of the {T}ribonacci numbers and the equation
  {$T_n=m!$}.
\newblock {\em J. Integer Seq.}, 17(10):Article 14.10.1, 8, 2014.

\bibitem{Medina&Moll&Rowland:2017}
L.~A. Medina, V.~H. Moll, and E.~Rowland.
\newblock Periodicity in the {$p$}-adic valuation of a polynomial.
\newblock {\em J. Number Theory}, 180:139--153, 2017.
\newblock \doi{10.1016/j.jnt.2017.03.006}.

\bibitem{Medina&Rowland:2015}
L.~A. Medina and E.~Rowland.
\newblock {$p$}-regularity of the {$p$}-adic valuation of the {F}ibonacci
  sequence.
\newblock {\em Fibonacci Quart.}, 53(3):265--271, 2015.
\newblock \doi{10.1002/2014rg000451}.

\bibitem{Murru&Sanna:2018}
N.~Murru and C.~Sanna.
\newblock On the {$k$}-regularity of the {$k$}-adic valuation of {L}ucas
  sequences.
\newblock {\em J. Th\'eor. Nombres Bordeaux}, 30(1):227--237, 2018.
\newblock \doi{10.5802/jtnb.1025}.

\bibitem{oeis}
{OEIS Foundation Inc.}
\newblock The {O}n-{L}ine {E}ncyclopedia of {I}nteger {S}equences, 2025.
\newblock Published electronically at \url{http://oeis.org}.

\bibitem{Rainville:1960}
E.~D. Rainville.
\newblock {\em Special Functions}.
\newblock Macmillan, NY, 1960.

\bibitem{Rodrigues:1816}
O.~Rodrigues.
\newblock De l'attraction des sphéroïdes.
\newblock {\em Correspondence sur l'École Impériale Polytechnique},
  3:361--385, 1816.

\bibitem{Shallit:2017}
J.~Shallit.
\newblock $3$-adic valuation of a sum involving binomial coefficients.
\newblock Post on MathOverflow, May 5 2017. Available at
  \url{https://mathoverflow.net/q/269052}.

\bibitem{Shallit:2000}
J.~Shallit.
\newblock $k$-regular sequences, 2000.
\newblock Talk for the Colloquium in Honor of Michel Mendes France, Bordeaux,
  France, September 12 2000. Available at
  \url{https://cs.uwaterloo.ca/~shallit/Talks/kreg7.pdf}.

\bibitem{Shen:2023}
Z.~Shen.
\newblock On a conjecture of {J}. {S}hallit about {A}p\'ery-like numbers.
\newblock {\em Int. Math. Res. Not. IMRN}, 2023(8):6691--6702, 2023.
\newblock \doi{10.1093/imrn/rnac047}.

\bibitem{Shu&Yao:2011}
Z.~Shu and J.-Y. Yao.
\newblock Analytic functions over {$\Z_p$} and {$p$}-regular sequences.
\newblock {\em C. R. Math. Acad. Sci. Paris}, 349(17-18):947--952, 2011.
\newblock \doi{10.1016/j.crma.2011.08.001}.

\bibitem{Strauss&Shallit:1990}
N.~Strauss and J.~Shallit.
\newblock Problem 6625.
\newblock {\em Amer. Math. Monthly}, 97:252, 1990.

\bibitem{Zagier:1992}
D.~Zagier.
\newblock Solution to problem 6625.
\newblock {\em Amer. Math. Monthly}, 99:66--69, 1992.

\end{thebibliography}

\end{document}